\documentclass[12pt,a4paper]{amsart}
\usepackage{amsmath}
\usepackage{srcltx}
\usepackage{fullpage}

\theoremstyle{plain}
\newtheorem{thm}{Theorem}[section]
\newtheorem{lem}[thm]{Lemma}
\theoremstyle{definition}

\newtheorem{ex}[thm]{Example}

\numberwithin{equation}{section}


\begin{document}
\title[A nonlinear nonlocal impulsive system]{Nonnegative solutions for a system of impulsive BVPs with nonlinear nonlocal BCs}
\date{}


\subjclass[2010]{Primary 34B37, secondary  34B10, 34B18, 47H30}%
\keywords{Fixed point index, cone, impulsive equation, system, positive solution.}%
\author{Gennaro Infante}
\address{Gennaro Infante, Dipartimento di Matematica e Informatica, Universit\`{a} della
Calabria, 87036 Arcavacata di Rende, Cosenza, Italy}%
\email{gennaro.infante@unical.it}%

\author{Paolamaria Pietramala}%
\address{Paolamaria Pietramala, Dipartimento di Matematica e Informatica, Universit\`{a} della
Calabria, 87036 Arcavacata di Rende, Cosenza, Italy}%
\email{pietramala@unical.it}%

\begin{abstract}
We study the existence of nonnegative solutions for
a system of impulsive differential equations subject to nonlinear, nonlocal boundary conditions. The system presents a coupling in the differential equation and in the boundary conditions. 
 The main tool that we use is the theory of fixed point index for compact maps.
\end{abstract}

\maketitle

\section{Introduction}
The aim of this paper is to study the existence and multiplicity of positive solutions for a class of systems of ordinary impulsive differential equations subject to nonlinear, nonlocal boundary conditions (BCs). The system presents a coupling in the nonlinearities and in the BCs. Problems with a coupling in the BCs often occur in applications, see for example~\cite{Amann, Asif-khan-jmma, cui-sun, Goodrich1, Goodrich2, gipp-mmas, gifmpp-cnsns, Leung, meh-nic, sun, yjoa}. 
On the other hand, impulsive problems have been studied not only because of a theoretical interest, but also 
because they model several phenomena in engineering, physics and life sciences. 
For example, Nieto and co-authors \cite{nieto1, nieto2} contributed to the field of population dynamics.
An introduction to the theory of impulsive differential equations and its applications can be found in the books
\cite{bainov,be-he-nt-book,lak-bai-sim,sam-per}.

Systems of second order impulsive boundary value problems (BVPs) have been studied in \cite{lele, lihuwu, raba, suchnio}. Here we consider the (fairly general) system of second order differential equations of the form
\begin{equation}\label{ODE}
\begin{array}{c}
u''(t) + g_1(t)f_1(t, u(t),v(t)) = 0,\ t \in (0,1),\ t\neq \tau_1,\\
v''(t) + g_2(t)f_2(t, u(t),v(t)) = 0,\ t \in (0,1),\ t\neq \tau_2,
\end{array}
\end{equation}
with impulsive terms of the type
\begin{equation}\label{imp}
\begin{array}{c}
\Delta u|_{t=\tau_1}=I_1(u(\tau_1)), \;\,\Delta u'|_{t=\tau_1}=N_1(u(\tau_1)), {\tau_1}\in (0,1),\\
\Delta v|_{t=\tau_2}=I_2(v(\tau_2)), \;\,\Delta v'|_{t=\tau_2}=N_2(v(\tau_2)), {\tau_2}\in (0,1),
\end{array}
\end{equation}
and nonlocal nonlinear BCs of `Sturm-Liouville' kind
\begin{equation}\label{SL}
\begin{array}{c}
a_{11}u(0)-b_{11}u'(0)=H_{1}(\alpha_{1}[u]), \;\;a_{12}u(1)+b_{12}u'(1)=L_{1}(\beta_{1}[v]),\\
a_{21}v(0)-b_{21}v'(0)=H_{2}(\alpha_{2}[v]), \;\;a_{22}v(1)+b_{22}v'(1)=L_{2}(\beta_{2}[u]),
\end{array}
\end{equation}
where
for $i=1,2$, $a_{i1},b_{i1},a_{i2},b_{i2}\in [0,\infty)$, $a_{i1}+b_{i1}\neq 0, a_{i2}+b_{i2}\neq 0$ and $\lambda=0$ is not an eigenvalue of the problem
$$
w''(t)=0, \ a_{i1}w(0)-b_{i1}w'(0)=0,
\;a_{i2}w(1)+b_{i2}w'(1)=0.
$$
Here $\Delta w|_{t=\tau}$ denotes the ``jump'' of the function $w$ in $t=\tau$, that is
$$
\Delta w|_{t=\tau}=w(\tau^+)-w(\tau^-),
$$
where $w(\tau^-)$ and $w(\tau^+)$ are the left and right limits of $w$ in $t=\tau$  and  $\alpha_{i}[\cdot]$, $\beta_{i}[\cdot]$ are bounded linear functionals 
given by positive Riemann-Stieltjes integrals, namely
\begin{equation*}
\alpha _{i}[w]=\int_{0}^{1}w(s)\,dA_{i}(s),\,\,\,\,\beta _{i}[w]=\int_{0}^{1}w(s)\,dB_{i}(s).
\end{equation*}%
This type of formulation includes, as special cases, multi-point or integral conditions, namely
\begin{equation*}
\alpha_{i}[w]=\sum_{j=1}^{m} \alpha_{ij}w(\eta_{ij})\ \text{and}\
\alpha_{i}[w]=\int_{0}^{1} {\alpha}_{i}(s)w(s)\,ds,
\end{equation*}
studied for example~\cite{dovo, hen-luca, gippmt,  Jank1,Jank2,kttmna,  rma, sotiris,sapa1, sapa2,  jw-jlms, jw-gi-jlmsII}. In the case of impulsive equations, nonlocal BCs have been studied by many authors, see for example \cite{be-ber-he1, begagont, ongipp, fe-du-ge, feng-xie, John1, Jank1,Jank2,liu-ge,xia-reg} and
references therein.
The functions  $H_{i}$, $L_{i}$ are continuous functions;  for earlier contributions on problems with nonlinear BCs we refer the reader to~\cite{ Cabada1, acfm-nonlin,  Goodrich1, Goodrich2, gi-caa, gipp-cant, lmpp, paola} and references therein.

Our idea is to start from  the results of \cite{gipp-mmas, gifmpp-cnsns}, valid for non-impulsive systems,  and to
rewrite the system \eqref{ODE}-\eqref{SL} as a system of perturbed Hammerstein integral equations, namely
\begin{gather*}
\begin{aligned}
u(t)=\gamma_{1}(t)H_{1}(\alpha_{1} [u])+\delta_1(t)L_{1}(\beta_{1}
[v])+\int_{0}^{1}k_1(t,s)g_1(s)f_1(s,u(s),v(s))\,ds+ G_1(u)(t), \\
v(t)= \gamma_{2}(t)H_{2}(\alpha_{2}
[v])+\delta_2(t)L_{2}(\beta_{2} [u])+\int_{0}^{1}k_2(t,s)g_2(s)f_2(s,u(s),v(s))\,ds+ G_2(v)(t),%
\end{aligned}
\end{gather*}
where the functions $\gamma_i, \delta_i$ are the unique solutions of
\begin{align*}
&{\gamma_i}''(t)=0, \;\;\; a_1{\gamma_i}(0)-b_1{\gamma_i}'(0)=1,
\;a_2{\gamma_i}(1)+b_2{\gamma_i}'(1)=0,\\
&{\delta_i}''(t)=0, \;\;\;\; a_1{\delta_i}(0)-b_1{\delta_i}'(0)=0,
\;\, a_2{\delta_i}(1)+b_2{\delta_i}'(1)=1,
\end{align*}
and the functions $G_i$, that are construct in natural manner, take care of the impulses. 

Systems of perturbed Hammerstein integral equations were studied in \cite{df-gi-do, Goodrich1, Goodrich2,gipp-ns, gipp-nonlin, gifmpp-cnsns, kang-wei, ya1}. Our existence theory for multiple positive solutions of the perturbed Hammerstein integral
equations  covers the system \eqref{ODE}-\eqref{SL} as a special case and we show in an example that all the constants that occur in our theory can be computed.  Here we focus on \emph{positive} measures, because we want our functionals to preserve some inequalities. Our methodology involves the construction of \emph{new} Stieltjes measures that take into account  the boundary conditions and the impulsive effect.

We make use of the classical fixed point index theory (see for example \cite{amann, guolak}) and also benefit of ideas from the papers
\cite{gi-caa, gipp-nonlin, gipp-mmas, gijwems,  gifmpp-cnsns, gippmz, jwgi-lms}.

\section{The System of Integral Equations}
We begin with the assumptions on the terms that occur in the system of perturbed Hammerstein integral equations
\begin{gather}
\begin{aligned}\label{syst}
u(t)=\gamma_{1}(t)H_{1}(\alpha_{1} [u])+\delta_1(t)L_{1}(\beta_{1}
[v])+G_1(u)(t)+F_1(u,v)(t), \\
v(t)= \gamma_{2}(t)H_{2}(\alpha_{2}
[v])+\delta_2(t)L_{2}(\beta_{2} [u])+G_2(v)(t)+F_2(u,v)(t),%
\end{aligned}
\end{gather}
where
\begin{equation}
F_i(u,v)(t):=\int_{0}^{1}k_i(t,s)g_i(s)f_i(s,u(s),v(s))\,ds.
\end{equation}
The functions $G_i$ are given, as in \cite{gippmz},  by
\begin{equation}
G_i(w)(t):=\gamma_{i}(t)\chi_{(\tau_i,1]}(d_{i1}I_i+e_{i1}N_i)(w(\tau_i))+\delta_i(t)\chi_{[0,\tau_i]}(d_{i2}I_i+e_{i2}N_i)(w(\tau_i)),
\end{equation}
 with  coefficients
 $$
  d_{i1}=\frac{\delta_i'(\tau_i)}{W_i(\tau_i)},\ e_{i1}=\frac{-\delta_i(\tau_i)}{W_i(\tau_i)},\ d_{i2}=\frac{\gamma_i'(\tau_i)}{W_i(\tau_i)}
  \text{ and  }e_{i2}=\frac{-\gamma_i(\tau_i)}{W_i(\tau_i)},
 $$
 where $W_i$ is the Wronskian, $W_i(t)=\gamma_i(t)\delta_i'(t)-\delta_i(t)\gamma_i'(t)$.
 
We assume that for every $i=1,2$,
\begin{itemize}
\item  $f_i: [0,1]\times [0,\infty)\times [0,\infty) \to
[0,\infty)$ satisfies Carath\'{e}odory conditions, that is, $f_i(\cdot,u,v)$
is measurable for each fixed $(u,v)$ and $f_i(t,\cdot,\cdot)$ is continuous
for almost every (a.e.) $t\in [0,1]$, and for each $r>0$ there exists $\phi_{i,r} \in
L^{\infty}[0,1]$ such that{}
\begin{equation*}
f_i(t,u,v)\le \phi_{i,r}(t) \;\text{ for } \; u,v\in [0,r]\;\text{ and
a.\,e.} \; t\in [0,1].
\end{equation*}%
{}
\item $k_i:[0,1]\times [0,1]\to [0,\infty)$ is
measurable, and for every $\tau\in [0,1]$ we have
\begin{equation*}
\lim_{t \to \tau} |k_i(t,s)-k_i(\tau,s)|=0 \;\text{ for a.\,e.}\, s \in [0,1].
\end{equation*}%
{}
\item  there exist a subinterval $[a_i,b_i] \subseteq
(\tau_i,1] $, a function $\Phi_i \in L^{\infty}[0,1]$, and a constant $c_{\Phi_i} \in
(0,1]$, such that
\begin{align*}
k_i(t,s)\leq \Phi_i(s) \text{ for } &t \in [0,1] \text{ and a.\,e.}\, s\in [0,1], \\
k_i(t,s) \geq c_{\Phi_i}\Phi_i(s) \text{ for } &t\in [a_i,b_i] \text{ and a.\,e.} \, s \in [0,1].
\end{align*}%
{}
\item $g_i\,\Phi_i \in L^1[0,1]$, $g_i \geq 0$ a.e., and $%
\int_{a_i}^{b_i} \Phi_i(s)g_i(s)\,ds >0$.
{}
\item $\alpha _{i}[\cdot ]$ and $\beta _{i}[\cdot ]$ are linear functionals
given by
\begin{equation*}
\alpha _{i}[w]=\int_{0}^{1}w(s)\,dA_{i}(s),\,\,\,\,\beta _{i}[w]=\int_{0}^{1}w(s)\,dB_{i}(s),
\end{equation*}%
involving Riemann-Stieltjes integrals; $A_{i}$ and $B_{i}$ are of bounded variation and continuous in $\tau_i$ and $%
dA_{i},dB_{i}$ are \emph{positive} measure.
\item  $H_{i},L_{i}: [0,\infty)\to [0,\infty)$ are continuous functions such that
there exist
$h_{i1}, h_{i2}, l_{i2} \in [0,\infty)$, 
with
\begin{equation*}
 h_{i1}w \leq H_{i}(w)\leq  h_{i2}w,\,\,\,\,  L_{i}(w)\leq  l_{i2}w,
\end{equation*}
for every $w\geq 0$.
\item $\gamma_{i},\delta_i \in C[0,1], \;\gamma_{i},\delta_i \geq 0$,
  and there exist $c_{\gamma_i} ,c_{\delta_i}\in(0,1]$ such that
\begin{equation*}
\gamma _{i}(t)\geq c_{\gamma_i}\| \gamma _{i}\| _{\infty },\,\,\delta _{i}(t)\geq c_{\delta_i}\| \delta _{i}\| _{\infty }\;\text{for every%
}\;t\in [ a_{i},b_{i}],
\end{equation*}%
where $\| w\| _{\infty }:=\sup \{|w(t)|,\;t\;\in [ 0,1]\}$.
\item $I_i, N_i:[0,\infty)\rightarrow \mathbb{R}$  are continuous functions and there exist $p_{i11},p_{i12},q_{i11}>0$ and $p_{i22}\ge 0$ such that for $w\in [0,\infty)$
 $$
 p_{i11} w\leq (d_{i1}I_i+e_{i1}N_i)(w) \leq p_{i12} w,
 $$
 and
 $$
 0\leq (d_{i2}I_i+e_{i2}N_i)(w) \leq p_{i22} w.
$$
\end{itemize}

We consider the Banach space
 \begin{align*}
 PC_{\tau}[0,1]:=\{w:&[0,1]\rightarrow\mathbb{R}, \; w \;\text{is continuous in}\; t\in [0,1]\backslash\{\tau\},\\
 &\text{there exist}\; w(\tau^-)=w(\tau)\;\text{and}\;|w(\tau^+)|<\infty \},
 \end{align*}
 endowed with the supremum norm $\|\cdot\| _{\infty }$.

We work in the space $PC_{\tau_1}[0,1]\times PC_{\tau_2}[0,1]$ endowed with the norm
\begin{equation*}
\| (u,v)\| :=\max \{\| u\| _{\infty },\| v\| _{\infty }\}.
\end{equation*}%
Let
\begin{equation*}
\tilde{K_{i}}:=\{w\in PC_{\tau_i}[0,1]:w(t)\geq 0\ \text{for}\ t\in [ 0,1]\,\,%
\text{and}\,\,\min_{t\in [ a_{i},b_{i}]}w(t)\geq c_{i}\|
w\| _{\infty }\},
\end{equation*}%
where 
$$
c_i=\min\Bigl\{c_{\Phi_i},c_{\gamma_i},c_{\delta_i},\dfrac{c_{\gamma_i}\|\gamma_i\|_{\infty}p_{i11}}{\max\{\|\gamma_i\|p_{i12}, \|\delta_i\|p_{i22}\}}\Bigr\}
$$ and consider the cone $K$
in $PC_{\tau_1}[0,1]\times PC_{\tau_2}[0,1]$ defined by
\begin{equation*}
\begin{array}{c}
K:=\{(u,v)\in \tilde{K_{1}}\times \tilde{K_{2}}\}.%
\end{array}%
\end{equation*}
For a \emph{positive} solution of the system \eqref{syst} we mean a solution
$(u,v)\in K$ of \eqref{syst} such that $\|(u,v)\|>0$.

We now  show that the integral operator
\begin{gather}
\begin{aligned}    \label{opT}
T(u,v)(t):=& 
\left(
\begin{array}{c}
\gamma_{1}(t)H_{1}(\alpha_{1} [u])+\delta_1(t)L_{1}(\beta_{1}
[v])+G_1(u)(t)+F_1(u,v)(t) \\
\gamma_{2}(t)H_{2}(\alpha_{2}
[v])+\delta_2(t)L_{2}(\beta_{2} [u])+G_2(v)(t)+F_2(u,v)(t)%
\end{array}
\right)
\\ &:=
\left(
\begin{array}{c}
T_1(u,v)(t) \\
T_2(u,v)(t)%
\end{array}
\right) ,
\end{aligned}
\end{gather}
leaves the cone $K$ invariant and is compact.
In order to do this, we use  the following compactness criterion, which can be found, for example, in
\cite{lak-bai-sim} and is an extension of the classical Ascoli-Arzel\`{a} Theorem.

\begin{lem} \label{dan}
A set $S\subseteq PC_\tau[0,1]$ is relatively compact in $PC_\tau[0,1]$ if
and only if $S$ is bounded and quasi-equicontinuous (i.e. $\forall u\in S$ and $\forall \varepsilon>0$,  $\exists \beta>0$ such that $t_1,t_2 \in [0,\tau]$ (or $t_1,t_2 \in (\tau,1]$) and $|t_1-t_2|<\beta$
implies $|u(t_1)-u(t_2)|<\varepsilon$).
\end{lem}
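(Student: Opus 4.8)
The plan is to reduce the statement to the classical Arzel\`a--Ascoli theorem by identifying $PC_\tau[0,1]$ isometrically with a product of two ordinary spaces of continuous functions, one over each closed subinterval $[0,\tau]$ and $[\tau,1]$. The key observation is that a function in $PC_\tau[0,1]$ is continuous except possibly for a jump at $\tau$, so its two ``halves'' are genuinely continuous up to the endpoint $\tau$: on the left because $w(\tau^-)=w(\tau)$, and on the right because the finite limit $w(\tau^+)$ supplies a continuous extension. This lets me transport both the hypotheses and the conclusion across the identification and then invoke the scalar theory on each factor. Throughout, quasi-equicontinuity is read in the uniform sense, with the modulus $\beta$ independent of $u$, as an Arzel\`a--Ascoli statement requires.

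First I would define $\Psi\colon PC_\tau[0,1]\to C[0,\tau]\times C[\tau,1]$ by $\Psi(w)=(w|_{[0,\tau]},\widetilde w)$, where $\widetilde w(t)=w(t)$ for $t\in(\tau,1]$ and $\widetilde w(\tau):=w(\tau^+)$. Both components are continuous: the left restriction because $w$ is continuous on $[0,\tau)$ with $w(\tau^-)=w(\tau)$, and $\widetilde w$ because assigning it the right limit at $\tau$ removes the jump. The map $\Psi$ is plainly linear and injective, and it is surjective, since for any pair $(f,g)$ the function equal to $f$ on $[0,\tau]$ and to $g$ on $(\tau,1]$ lies in $PC_\tau[0,1]$ and maps to $(f,g)$, no compatibility at $\tau$ being required. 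Equipping the product with the max norm, I would verify that $\Psi$ is an isometry: since $\|w\|_\infty=\max\{\sup_{[0,\tau]}|w|,\ \sup_{(\tau,1]}|w|\}$ and continuity of $g$ at $\tau$ gives $\sup_{(\tau,1]}|g|=\|g\|_{C[\tau,1]}$, one obtains $\|w\|_\infty=\max\{\|w|_{[0,\tau]}\|,\ \|\widetilde w\|\}$. Hence $\Psi$ is an isometric isomorphism, in particular a homeomorphism, so relative compactness is preserved in both directions.

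Next I would match the two notions through $\Psi$. Boundedness of $S$ is equivalent to boundedness of each projection $\pi_1\Psi(S)\subseteq C[0,\tau]$ and $\pi_2\Psi(S)\subseteq C[\tau,1]$. Quasi-equicontinuity is exactly equicontinuity of the family of left restrictions on $[0,\tau]$ together with equicontinuity on $(\tau,1]$; because $w(\tau^+)$ exists, a limiting argument upgrades the latter to equicontinuity of $\{\widetilde w\}$ on the closed interval $[\tau,1]$, that is, equicontinuity of $\pi_2\Psi(S)$. Applying the classical Arzel\`a--Ascoli theorem on each factor, boundedness and equicontinuity of $\pi_j\Psi(S)$ is equivalent to its relative compactness in $C[0,\tau]$, respectively $C[\tau,1]$. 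Finally, a subset of a finite product of complete metric spaces with the max metric is relatively compact if and only if each of its projections is: the closure of $\Psi(S)$ lies inside the product of the two projection closures, which is compact as a finite product of compacts, while conversely projections of a relatively compact set are relatively compact. Chaining these equivalences through the homeomorphism $\Psi$ yields precisely that $S$ is relatively compact in $PC_\tau[0,1]$ if and only if it is bounded and quasi-equicontinuous; since the reasoning runs through ``if and only if'' at every link, both implications are obtained at once.

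The only delicate bookkeeping, and the step I would treat most carefully, is the behaviour at the jump point $\tau$. I must confirm that $\Psi$ is genuinely norm-preserving there, namely that the supremum over the half-open piece $(\tau,1]$ coincides with the supremum over the closed piece $[\tau,1]$, which holds by continuity of the extension at $\tau$; and that the two-interval phrasing of quasi-equicontinuity translates without loss into honest equicontinuity on the two closed subintervals, the right-hand case needing the limit $s\to\tau^+$ to pass the inequality for pairs $(\tau,t)$. Once these two points are checked, everything else is the standard Arzel\`a--Ascoli machinery applied factor by factor.
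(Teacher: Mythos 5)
Your proof is correct, but there is nothing in the paper to compare it against line by line: the paper does not prove this lemma at all, it quotes it as a known compactness criterion from the book \cite{lak-bai-sim}, remarking only that it is an extension of the classical Ascoli--Arzel\`a theorem. Your argument makes that remark precise, and every link in it is sound: the map $\Psi(w)=(w|_{[0,\tau]},\widetilde w)$ is a well-defined linear bijection because $w(\tau^-)=w(\tau)$ makes the left piece continuous and the finite limit $w(\tau^+)$ closes up the right piece; it is an isometry because the sup of the continuous extension over $[\tau,1]$ equals the sup of $w$ over $(\tau,1]$; boundedness and equicontinuity transfer factor by factor; relative compactness of a subset of the two-factor product (max metric) is equivalent to relative compactness of both projections; and factor-wise Arzel\`a--Ascoli closes the chain of equivalences, giving both implications at once. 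One point where you quietly improve on the paper deserves to be highlighted rather than hidden in a side remark: the parenthetical definition of quasi-equicontinuity in the statement, read literally with $\beta$ allowed to depend on $u$, is satisfied by \emph{every} subset of $PC_\tau[0,1]$ (each piece of each $u$ is uniformly continuous on its interval, the right piece because it extends continuously to $[\tau,1]$), so under that reading the ``if'' direction is false --- the closed unit ball is a counterexample. Your decision to read $\beta$ as uniform over $u\in S$ is the intended interpretation, the one used in the cited source, and the only one under which the lemma is true; likewise your limiting argument upgrading equicontinuity on $(\tau,1]$ to equicontinuity of the extensions on $[\tau,1]$ (with $<\varepsilon$ weakening harmlessly to $\le\varepsilon$ at the endpoint) is exactly the bookkeeping needed at the jump.
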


\begin{lem}
The operator \eqref{opT} maps $K$ into $K$ and is compact.
\end{lem}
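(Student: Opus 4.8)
\emph{Proof plan.} The plan is to split the statement into two independent claims: that $T$ leaves $K$ invariant and that $T$ is compact. For the invariance I would fix $(u,v)\in K$ and treat each of the four summands of $T_1(u,v)$ separately, the argument for $T_2(u,v)$ being symmetric. Nonnegativity is immediate from the hypotheses: since $u,v\ge 0$ and $dA_1,dB_1$ are positive measures, $\alpha_1[u],\beta_1[v]\ge 0$, and then $\gamma_1,\delta_1\ge 0$ together with $H_1,L_1\ge 0$ make the first two terms nonnegative; $F_1(u,v)\ge 0$ because $k_1,g_1,f_1\ge 0$; and $G_1(u)\ge 0$ because $p_{111}>0$ gives $(d_{11}I_1+e_{11}N_1)(u(\tau_1))\ge p_{111}u(\tau_1)\ge 0$, while $(d_{12}I_1+e_{12}N_1)(u(\tau_1))\ge 0$ by hypothesis.

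The core of the invariance is the lower bound $\min_{t\in[a_1,b_1]}T_1(u,v)(t)\ge c_1\|T_1(u,v)\|_\infty$. I would establish a Harnack-type inequality for each summand with its own constant, weaken every constant to $c_1$, and then add the summands, using the elementary fact that nonnegative functions $w_j$ with $\min_{[a_1,b_1]}w_j\ge c\,\|w_j\|_\infty$ (common $c$) satisfy $\min_{[a_1,b_1]}\sum_j w_j\ge c\sum_j\|w_j\|_\infty\ge c\,\|\sum_j w_j\|_\infty$. For $\gamma_1 H_1(\alpha_1[u])$ the constant is $c_{\gamma_1}$, for $\delta_1 L_1(\beta_1[v])$ it is $c_{\delta_1}$, and for $F_1(u,v)$ it is $c_{\Phi_1}$, each coming directly from the corresponding cone hypothesis ($\gamma_1(t)\ge c_{\gamma_1}\|\gamma_1\|_\infty$ on $[a_1,b_1]$, and $c_{\Phi_1}\Phi_1(s)\le k_1(t,s)\le\Phi_1(s)$). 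The delicate term is $G_1(u)$: since $[a_1,b_1]\subseteq(\tau_1,1]$ the characteristic functions collapse $G_1(u)$ to $\gamma_1(t)(d_{11}I_1+e_{11}N_1)(u(\tau_1))$ on $[a_1,b_1]$, so its minimum there is at least $c_{\gamma_1}\|\gamma_1\|_\infty p_{111}u(\tau_1)$, whereas the disjoint supports of the two characteristic functions give $\|G_1(u)\|_\infty\le\max\{\|\gamma_1\|_\infty p_{112},\|\delta_1\|_\infty p_{122}\}u(\tau_1)$; the resulting quotient is exactly the fourth entry in the definition of $c_1$ (and the inequality is trivial when $u(\tau_1)=0$). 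Taking $c_1$ as the minimum of the four constants yields the bound, and the same reasoning for $T_2$ finishes $T(K)\subseteq K$.

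For compactness I would show that $T$ sends bounded subsets of $K$ to relatively compact sets and is continuous. Boundedness on $\{\|(u,v)\|\le r\}$ is routine from the growth hypotheses ($H_i(w)\le h_{i2}w$, $L_i(w)\le l_{i2}w$, $f_i\le\phi_{i,r}$, the upper bounds on $d_{ij}I_i+e_{ij}N_i$, and boundedness of $\alpha_i,\beta_i$), which bound every summand uniformly. The substantive point is quasi-equicontinuity in the sense of Lemma~\ref{dan}, verified summand-by-summand on each of $[0,\tau_i]$ and $(\tau_i,1]$. The terms $\gamma_iH_i(\alpha_i[\cdot])$ and $\delta_iL_i(\beta_i[\cdot])$ are equicontinuous because $\gamma_i,\delta_i$ are uniformly continuous on $[0,1]$ and the scalar coefficients are bounded on the $r$-ball; $G_i$ is, on each of the two subintervals, a bounded multiple of $\gamma_i$ or $\delta_i$, hence equicontinuous there, the only discontinuity being the admissible jump at $\tau_i$; and for $F_i$ the estimate $|F_i(u,v)(t)-F_i(u,v)(t')|\le\int_0^1|k_i(t,s)-k_i(t',s)|g_i(s)\phi_{i,r}(s)\,ds$ together with $\lim_{t\to\tau}|k_i(t,s)-k_i(\tau,s)|=0$ a.e. and the dominating function $g_i\Phi_i\in L^1$ gives equicontinuity by dominated convergence. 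Continuity of $T$ is obtained the same way: for $(u_n,v_n)\to(u,v)$ the functionals $\alpha_i,\beta_i$ and the continuous maps $H_i,L_i,I_i,N_i$ pass to the limit, while $f_i(s,u_n(s),v_n(s))\to f_i(s,u(s),v(s))$ a.e. under the common bound $\phi_{i,r}$ lets dominated convergence handle $F_i$. Invoking Lemma~\ref{dan}, $T$ maps bounded sets into relatively compact sets and is therefore compact.

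I expect the main obstacle to be the min--max estimate for the impulsive term $G_i$ in the invariance part: one must exploit $[a_i,b_i]\subseteq(\tau_i,1]$ to eliminate one characteristic function, use the two-sided bounds $p_{i11}w\le(d_{i1}I_i+e_{i1}N_i)(w)\le p_{i12}w$ and $0\le(d_{i2}I_i+e_{i2}N_i)(w)\le p_{i22}w$, and recognise that the disjoint supports force precisely the quotient appearing in $c_i$; getting this constant right is exactly what makes the cone well chosen. The accompanying technical care is the dominated-convergence handling of $F_i$ near the impulse point, but that is standard once the $L^1$ domination by $g_i\Phi_i$ is in place.
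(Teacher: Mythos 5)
Your proposal is correct and follows essentially the same route as the paper: the paper likewise bounds the three non-impulsive summands via $c_{\gamma_1},c_{\delta_1},c_{\Phi_1}$, handles $G_1$ separately by exploiting $[a_1,b_1]\subseteq(\tau_1,1]$ and the disjoint supports to obtain exactly the quotient appearing in $c_1$, and then combines, while compactness is obtained summand-by-summand through the quasi-equicontinuity criterion of Lemma~\ref{dan}. The only cosmetic difference is that the paper first groups the non-impulsive terms into a single auxiliary operator $\Lambda_1$ before adding $G_1$, whereas you treat all four summands uniformly via the common-constant summation fact.
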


\begin{proof}
Take $(u,v)\in K$ such that $\| (u,v)\| \leq r$. Then  we have, for $%
t\in [ 0,1]$,
\begin{equation*}
\Lambda_{1}(u,v)(t):=\gamma_{1}(t)H_{1}(\alpha_{1} [u])+\delta_1(t)L_{1}(\beta_{1}[v])+\int_{0}^{1}k_{1}(t,s)g_{1}(s)f_{1}(s,u(s),v(s))\,ds
\end{equation*}%
and therefore
\begin{equation*}
\| \Lambda_{1}(u,v)\|_{\infty } \leq \| \gamma _{1}\| _{\infty }H_{1}(\alpha_{1} [u])+\| \delta _{1}\| _{\infty }L_{1}(\beta_{1}
[v])+\int_{0}^{1}\Phi_{1}(s)g_{1}(s)f_{1}(s,u(s),v(s))\,ds.
\end{equation*}%
We obtain,  as in Lemma 1 of~\cite{gifmpp-cnsns}, 
\begin{align*}
\min_{t\in [ a_{1},b_{1}]}\Lambda_{1}(u,v)(t) \geq &c_{\gamma_1}\| \gamma _{1}\| _{\infty }H_{1}(\alpha_{1} [u])+c_{\delta_1}\| \delta _{1}\| _{\infty }L_{1}(\beta_{1}[v])\\
&+c_{\Phi_1}\int_{0}^{1}\Phi _{1}(s)g_{1}(s)f_{1}(s,u(s),v(s))\,ds 
\geq \min\{c_{\Phi_i},c_{\gamma_i},c_{\delta_i}\}\| \Lambda_{1}(u,v)\|_{\infty } .
\end{align*}%

On the other hand, for $t\in [0,\tau_1]$ we have
$$
G_1(u)(t)\le \|\delta_1\|_{\infty } p_{122}u(\tau_1)
$$
and for $t\in (\tau_1,1]$
$$
G_1(u)(t)\le \|\gamma_1\|_{\infty } p_{112}u(\tau_1).
$$
Therefore for $t\in[0,1]$ we obtain
\begin{equation*}
  G_1(u)(t)\le u(\tau_1)\max\{\|\gamma_1\|_{\infty }p_{112}, \|\delta_1\|_{\infty }p_{122}\}
\end{equation*}
and thus
\begin{equation*}
\|G_1(u)\|\le u(\tau_1)\max\{\|\gamma_1\|_{\infty }p_{112}, \|\delta_1\|_{\infty }p_{122}\}.
\end{equation*}
For $t\in [a_1,b_1]$, we get
$$
\aligned
G_1(u)(t)&= \gamma_1(t)(d_{11}I_1+e_{11}N_1)(u(\tau_1))\\
                      &\geq\dfrac{c_{\gamma_1}\|\gamma_1\|_{\infty}p_{111}}{\max\{\|\gamma_1\|_{\infty }p_{112}, \|\delta_1\|_{\infty }p_{122}\}}u(\tau_1)\max\{\|\gamma_1\|_{\infty }p_{112}, \|\delta_1\|_{\infty }p_{122}\}.
\endaligned
$$
Thus we obtain
$$
\min_{t\in [a_1,b_1]}T_1(u,v)(t) \ge c_1\|T_1(u,v)\|_{\infty }.
$$
Moreover, we have $T_1(u,v)(t)\geq 0$. Hence we have $T_{1}(u,v)\in \tilde{K_{1}}$. In a similar manner we proceed for $ T_{2}(u,v)$.

Furthermore, the map $T$ is compact since the components $T_{i}$ are sum of  compact maps: the compactness of $F_{i}$ is well-known; the compactness of the term $G_i$ follows, in a similar way as in ~\cite{gippmz}, from Lemma~\ref{dan}; since $\gamma
_{i},\delta _{i},H _{i},L _{i}$ are continuous, the remaining terms map bounded sets into bounded subsets of a finite dimensional space.
\end{proof}

\section{Fixed point index calculations}

\subsection{Preliminaries and notations}
We  recall some basic facts regarding the classical fixed point index for compact maps, see for example  \cite{amann, guolak}. 

Let $K$ be a cone in a Banach space $X$. If $\Omega$ is a bounded open subset of $K$ (in the relative
topology) we denote by $\overline{\Omega}$ and $\partial \Omega$
the closure and the boundary relative to $K$. When $\Omega$ is an open
bounded subset of $X$ we write $\Omega_{K}=\Omega \cap K$, an open subset of $K$.
\begin{thm}
Let $K$ be a cone in a Banach space $X$ and let $\Omega$ be an open bounded set with $0\in \Omega_{K}$ and
$\overline{\Omega}_{K}\ne K$. Assume that $T:\overline{\Omega}_{K}\to K$ is a compact map such that $x\neq Tx$ for $x\in \partial \Omega_{K}$. Then
the fixed point index $i_{K}(T, \Omega_{K})$ has the following
properties.
\begin{itemize}
\item[(1)] If there
exists $e\in K\setminus \{0\}$ such that $x\neq Tx+\mu e$ for
all $x\in \partial \Omega_{K}$ and all $\mu
\geq 0$, then $i_{K}(T, \Omega_{K})=0$.
\item[(2)] If $Tx \neq \mu x$ for all $x\in
\partial \Omega_{K}$ and all $\mu \geq 1$, then $i_{K}(T, \Omega_{K})=1$.
\item[(3)] Let $\Omega^{1}$ be open in $X$ with
$\overline{\Omega_{K}^{1}}\subset \Omega_{K}$. If $i_{K}(T, \Omega_{K})=1$ and
$i_{K}(T, \Omega_{K}^{1})=0$, then $T$ has a fixed point in
$\Omega_{K}\setminus \overline{\Omega_{K}^{1}}$. The same result holds if
$i_{K}(T, \Omega_{K})=0$ and $i_{K}(T, \Omega_{K}^{1})=1$.
\end{itemize}
\end{thm}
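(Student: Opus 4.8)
The plan is to derive all three statements from the standard axioms of the fixed point index for compact maps on a cone — normalization, additivity (excision), homotopy invariance, and the solution (existence) property — as developed in \cite{amann, guolak}. The blanket hypothesis $x\neq Tx$ on $\partial\Omega_K$ guarantees that $i_K(T,\Omega_K)$ is well defined, and each of the three conclusions will follow from an appropriate admissible homotopy or decomposition. For property (1), I would use the affine homotopy $h(x,\mu):=Tx+\mu e$. The hypothesis states precisely that $x\neq h(x,\mu)$ for all $x\in\partial\Omega_K$ and all $\mu\geq 0$, so $h$ is admissible on $[0,M]$ for every $M>0$, and homotopy invariance gives $i_K(T,\Omega_K)=i_K(h(\cdot,M),\Omega_K)$. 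Since $T$ is compact, $T(\overline{\Omega}_K)$ is bounded, say by $R$, and since $\Omega$ is bounded, $S:=\sup_{x\in\overline{\Omega}_K}\|x\|_\infty<\infty$; choosing $M$ so large that $M\|e\|_\infty-R>S$ forces $\|Tx+Me\|_\infty>\|x\|_\infty$ for every $x\in\overline{\Omega}_K$, so $h(\cdot,M)$ has no fixed point in $\overline{\Omega}_K$ at all. By the solution property the index of such a map is $0$, whence $i_K(T,\Omega_K)=0$.

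For property (2), I would use the homotopy $h(x,t):=tTx$ on $t\in[0,1]$ and verify admissibility on the boundary. A fixed point $x=tTx$ with $x\in\partial\Omega_K$ cannot occur for $t=0$ (that would force $x=0\in\Omega_K$, contradicting $x\in\partial\Omega_K$) nor for $t\in(0,1]$ (that would give $Tx=(1/t)x$ with $1/t\geq 1$, contradicting the hypothesis $Tx\neq\mu x$ for $\mu\geq 1$). Hence homotopy invariance yields $i_K(T,\Omega_K)=i_K(h(\cdot,0),\Omega_K)$, and $h(\cdot,0)$ is the constant zero map, whose only fixed point is $0\in\Omega_K$; the normalization property then gives index $1$.

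For property (3), the key tool is additivity combined with the solution property. Since $i_K(T,\Omega_K^1)$ is assumed to be defined, $T$ is fixed-point-free on $\partial\Omega_K^1$, and by the main hypothesis also on $\partial\Omega_K$; thus $T$ has no fixed points in $\overline{\Omega}_K\setminus\bigl(\Omega_K^1\cup(\Omega_K\setminus\overline{\Omega_K^1})\bigr)$, a set contained in $\partial\Omega_K^1\cup\partial\Omega_K$. Additivity over the disjoint open sets $\Omega_K^1$ and $\Omega_K\setminus\overline{\Omega_K^1}$ then gives
$$
i_K(T,\Omega_K)=i_K(T,\Omega_K^1)+i_K\bigl(T,\Omega_K\setminus\overline{\Omega_K^1}\bigr).
$$
In the first scenario this reads $1=0+i_K(T,\Omega_K\setminus\overline{\Omega_K^1})$ and in the second $0=1+i_K(T,\Omega_K\setminus\overline{\Omega_K^1})$; either way the index over the annular region is nonzero ($+1$ or $-1$), so the solution property furnishes a fixed point of $T$ in $\Omega_K\setminus\overline{\Omega_K^1}$.

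The homotopy-admissibility checks in (1) and (2) are routine, as is the bookkeeping of boundaries in (3). The one step that demands genuine care is in (1): one must combine the boundedness of $\Omega$ with the compactness-induced boundedness of $T(\overline{\Omega}_K)$ to guarantee that the large-$\mu$ perturbation is fixed-point-free on \emph{all} of $\overline{\Omega}_K$, not merely on its boundary, before invoking the solution property. I expect that estimate to be the main, if modest, obstacle.
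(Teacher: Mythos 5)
Your proof is correct and is precisely the classical argument: the paper itself offers no proof of this theorem, stating it only as a recollection of standard properties of the fixed point index with a citation to \cite{amann, guolak}, and your homotopies $Tx+\mu e$ and $tTx$ together with the additivity/excision argument for (3) are exactly the proofs found in those references. The only cosmetic slip is writing $\|\cdot\|_\infty$ for the norm of the abstract Banach space $X$ in part (1); the estimate $\|Tx+Me\|\geq M\|e\|-R>S\geq\|x\|$ itself is sound.
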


For our index calculations, we use  the following (relative) open bounded sets in $K$:
\begin{equation*}
K_{\rho} = \{ (u,v) \in K : \|(u,v)\|< \rho \},
\end{equation*}
and
\begin{equation*}
V_\rho=\{(u,v) \in K: \min_{t\in [a_1,b_1]}u(t)<\rho\ \text{and}\ \min_{t\in
[a_2,b_2]}v(t)<\rho\}.
\end{equation*}
The set $V_\rho$ (in the context of systems) was introduced by the authors in~\cite{gipp-ns} and is equal to the set
called $\Omega^{\rho /c}$ in~\cite{df-gi-do}.
From now on we set 
$$c=\min\{{c_1},{c_2}\}.$$
We utilize  the following Lemma, the proof is similar to Lemma $5$ of \cite{df-gi-do} and is omitted.

\begin{lem}  
 The sets $K_{\rho}$ and $V_{\rho}$  have the following properties:
\begin{itemize}
\item[-] $K_{\rho}\subset V_{\rho}\subset K_{\rho/c}$.
\item[-] $(w_1,w_2) \in \partial V_\rho$ \; iff \; $(w_1,w_2)\in K$ and $\displaystyle
\min_{t\in [a_i,b_i]} w_i(t)= \rho$ for some $i\in \{1,2\}$ and $\displaystyle
\min_{t\in [a_i,b_i]}w_i(t)
\le \rho$ for each $i\in \{1,2\}$.
\item[-] If $(w_1,w_2) \in \partial V_\rho$, then for some $i\in\{1,2\}$ $\rho \le w_i(t) \le \rho/c$
 for each $t \in [a_i,b_i]$ and for each $i\in \{1,2\}$ we have $0 \leq w_i(t) \leq \rho/c$ for each $t\in [a_i,b_i]$ and $\|w_i\|_\infty \leq \rho/c$.
\end{itemize}
\end{lem}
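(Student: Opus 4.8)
The plan is to verify the three bullet points in order, relying throughout on two facts: the defining inequality of the cone, namely $\min_{t\in[a_i,b_i]} w_i(t) \ge c_i \|w_i\|_\infty \ge c\|w_i\|_\infty$ for $(w_1,w_2)\in K$, and the continuity of each functional $m_i(w_1,w_2):=\min_{t\in[a_i,b_i]} w_i(t)$ on $PC_{\tau_1}[0,1]\times PC_{\tau_2}[0,1]$. The latter holds because $[a_i,b_i]\subseteq(\tau_i,1]$, so every $w_i\in PC_{\tau_i}[0,1]$ is genuinely continuous on the compact set $[a_i,b_i]$, the minimum is attained, and $|m_i(w)-m_i(\tilde w)|\le \|w_i-\tilde w_i\|_\infty$.

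For the first bullet I would argue directly. If $\|(u,v)\|<\rho$ then $\|u\|_\infty,\|v\|_\infty<\rho$, whence $m_1(u,v)\le\|u\|_\infty<\rho$ and $m_2(u,v)\le\|v\|_\infty<\rho$, so $(u,v)\in V_\rho$; this gives $K_\rho\subset V_\rho$. Conversely, if $(u,v)\in V_\rho$ the cone inequality yields $c\|u\|_\infty\le m_1(u,v)<\rho$ and $c\|v\|_\infty\le m_2(u,v)<\rho$, so $\|(u,v)\|<\rho/c$ and $(u,v)\in K_{\rho/c}$, proving $V_\rho\subset K_{\rho/c}$.

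The heart of the argument is the second bullet, the description of $\partial V_\rho$, and this is where I expect the only real subtlety. Since $V_\rho=\{m_1<\rho\}\cap\{m_2<\rho\}$ is open in $K$ by continuity of $m_1,m_2$, we have $\partial V_\rho=\overline{V_\rho}\setminus V_\rho$. Continuity immediately gives $\overline{V_\rho}\subseteq\{m_1\le\rho\}\cap\{m_2\le\rho\}$, so any boundary point satisfies $m_i\le\rho$ for each $i$; and failing to lie in $V_\rho$ forces $m_i=\rho$ for at least one $i$. For the reverse inclusion I would exploit that $K$ is a cone: given $(w_1,w_2)\in K$ with both $m_i\le\rho$ and some $m_j=\rho$, the scaled points $(sw_1,sw_2)$ with $s\in(0,1)$ lie in $K$ and satisfy $m_i(sw)=s\,m_i(w)<\rho$, hence belong to $V_\rho$ and converge to $(w_1,w_2)$ as $s\to1^-$. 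This places $(w_1,w_2)$ in $\overline{V_\rho}$, and since $m_j=\rho$ it is not in $V_\rho$, so it is a boundary point. The delicate point to get right is precisely that the \emph{and} in the definition of $V_\rho$ makes the exterior the set where \emph{some} $m_i>\rho$, so one must retain the constraint $m_i\le\rho$ for \emph{both} $i$ when describing $\partial V_\rho$.

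Finally, the third bullet follows by combining the previous two. If $(w_1,w_2)\in\partial V_\rho$, pick the index $i$ with $m_i=\rho$; then $w_i(t)\ge\min_{t\in[a_i,b_i]}w_i(t)=\rho$ for $t\in[a_i,b_i]$. Since $\partial V_\rho\subset\overline{V_\rho}\subset\overline{K_{\rho/c}}$ by the first bullet, we have $\|(w_1,w_2)\|\le\rho/c$, so $\|w_i\|_\infty\le\rho/c$ for each $i$ and thus $w_i(t)\le\rho/c$ on $[0,1]$. This yields $\rho\le w_i(t)\le\rho/c$ on $[a_i,b_i]$ for the chosen index, while $w_j\ge0$ (membership in $\tilde K_j$) together with $\|w_j\|_\infty\le\rho/c$ gives $0\le w_j(t)\le\rho/c$ on $[a_j,b_j]$ for each $j$, completing the proof.
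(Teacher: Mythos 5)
Your proof is correct. The paper itself omits the argument, deferring to Lemma 5 of \cite{df-gi-do}, and your reasoning is essentially the standard one used there: the inclusions follow from $\min_{[a_i,b_i]}w_i\le\|w_i\|_\infty$ and the cone inequality $\min_{[a_i,b_i]}w_i\ge c_i\|w_i\|_\infty$, while the boundary characterization combines continuity of the minimum functionals (valid since $[a_i,b_i]\subseteq(\tau_i,1]$ avoids the jump point) with the radial scaling $(sw_1,sw_2)\to(w_1,w_2)$ as $s\to1^-$ to show that points with some $\min_{[a_i,b_i]}w_i=\rho$ and all $\min_{[a_i,b_i]}w_i\le\rho$ lie in $\overline{V_\rho}\setminus V_\rho$. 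You correctly flag the one genuinely delicate point, namely that the conjunction in the definition of $V_\rho$ forces the constraint $\min_{[a_i,b_i]}w_i\le\rho$ to be retained for both indices on the boundary, so nothing is missing.
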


We  introduce, in a similar way as in \cite{gi-pp-san}, the linear functionals
\begin{align*}
\tilde{\alpha}_i[w]:=&h_{i2}\alpha_i[w]+p_{i12} w(\tau_i):= \int_0^1 w(s)\,d \tilde{A}_i(s),\,\,i=1,2,\\
\bar{\alpha}_i[w]:=&h_{i1}\alpha_i[w]+p_{i11} w(\tau_i):= \int_0^1 w(s)\,d \bar{A}_i(s),\,\,\,i=1,2,\\
\end{align*}
and, for a measure $dC$, we use the notation 
$$
\mathcal{K}^i_{C}(s):=\int_0^1 k_i(t,s) \,dC(t).
$$
We assume from now on that
\begin{itemize}
\item $ \tilde{\alpha}_1[\gamma_1] <1,\; \text{and}\, \tilde{\alpha}_2[\gamma_2]<1.$
\end{itemize}

\subsection{Index on the set $K_{\rho}$}
We    prove a result concerning the fixed point index on the set $K_{\rho}$.

\begin{lem} 
Assume that
\begin{enumerate}
\item[$(\mathrm{I}_{\protect\rho }^{1})$]  there exists $\rho >0$
such that for every $i=1,2$
\begin{equation}
\Bigl(\frac{ \|\gamma _{i} \| _{\infty }\tilde{\alpha}_i[\delta _{i}]}{1-\tilde{\alpha}_i[\gamma _{i}]}+ \| \delta _{i} \| _{\infty
}\Bigr)(l_{i2}\beta_i[1]+p_{i22})+f_{i}^{0,\rho }\Bigl(\dfrac{1}{m_{i}}+\dfrac{ \| \gamma
_{i} \| _{\infty }}{1-\tilde{\alpha} _{i}[\gamma _{i}]}\int_{0}^{1}\mathcal{K}^i
_{\tilde{A_i}}(s)g_{i}(s)\,ds\Bigr)<1,  \label{eqmestt}
\end{equation}%
where
\begin{equation*}
f_{i}^{0,{\rho }}=\sup \Bigl\{\frac{f_{i}(t,u,v)}{\rho }:\;(t,u,v)\in
[ 0,1]\times [ 0,\rho ]\times [ 0,\rho ]\Bigr\}\ \text{and}%
\ \frac{1}{m_{i}}=\sup_{t\in [ 0,1]}\int_{0}^{1}k_{i}(t,s)g_{i}(s)\,ds.
\end{equation*}%
{}
\end{enumerate}

Then  $i_{K}(T,K_{\rho})=1$.
\end{lem}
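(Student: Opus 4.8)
The plan is to show $i_K(T,K_\rho)=1$ by verifying hypothesis (2) of the index theorem: namely, that $T(u,v)\neq \mu(u,v)$ for every $(u,v)\in\partial K_\rho$ and every $\mu\geq 1$. Since $\partial K_\rho=\{(u,v)\in K:\|(u,v)\|=\rho\}$, it suffices to derive a contradiction from the assumption that some $(u,v)\in\partial K_\rho$ satisfies $T(u,v)=\mu(u,v)$ with $\mu\geq 1$. The strategy is to bound each component $T_i(u,v)$ from above on $[0,1]$ in terms of $\rho$, using the structural estimates from the cone definition together with condition $(\mathrm{I}_\rho^1)$, and to conclude that $\mu\rho=\mu\|(u,v)\|$ is strictly less than $\rho$, which is impossible.

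First I would estimate the functional $\tilde\alpha_i[u]$. Applying $\tilde\alpha_i$ to the first line of $\mu u = T_1(u,v)$ and using the upper bounds $H_i(w)\le h_{i2}w$, $(d_{i1}I_i+e_{i1}N_i)(w)\le p_{i12}w$ built into $\tilde\alpha_i$, one gets a relation that lets me solve for $\tilde\alpha_i[u]$ in terms of the remaining quantities. The key algebraic move is that the self-referential term $\tilde\alpha_i[\gamma_i]\,\tilde\alpha_i[u]$ can be absorbed on the left, producing the factor $1/(1-\tilde\alpha_i[\gamma_i])$, which is where the standing assumption $\tilde\alpha_i[\gamma_i]<1$ is essential. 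Concretely I expect to obtain
\begin{equation*}
\tilde\alpha_i[u]\le \frac{1}{1-\tilde\alpha_i[\gamma_i]}\Bigl(\tilde\alpha_i[\delta_i](l_{i2}\beta_i[1]+p_{i22})\rho + \rho\int_0^1 \mathcal{K}^i_{\tilde A_i}(s)g_i(s)f_i^{0,\rho}\,ds\Bigr),
\end{equation*}
after bounding $f_i(s,u(s),v(s))\le \rho f_i^{0,\rho}$ and $\beta_i[v]\le \beta_i[1]\,\rho$, and using $\|u\|_\infty,\|v\|_\infty\le\rho$. Substituting this bound back into $\mu u(t)=T_1(u,v)(t)$, estimating $\gamma_1(t)H_1(\alpha_1[u])\le \|\gamma_i\|_\infty h_{i2}\alpha_i[u]$ and the Hammerstein term $F_i$ by $\|u\|_\infty/m_i$ times $\rho f_i^{0,\rho}$, and collecting terms should reproduce exactly the left-hand side of \eqref{eqmestt} times $\rho$, giving $\mu\rho\le(\text{LHS of }\eqref{eqmestt})\,\rho<\rho$.

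The main obstacle will be the careful bookkeeping that decouples the two self-referential occurrences: $\tilde\alpha_i[u]$ appears both through $H_i(\alpha_i[u])$ and through the impulse contribution $G_i(u)$ folded into the modified functional $\tilde\alpha_i$, so one must apply the modified measure $d\tilde A_i$ to the whole identity $\mu u=T_1(u,v)$ rather than to $u$ naively, and verify that the impulsive term $G_i$ and the $H_i$ term combine precisely into the quantities that define $\tilde\alpha_i$. Getting the term $\frac{\|\gamma_i\|_\infty}{1-\tilde\alpha_i[\gamma_i]}\int_0^1\mathcal{K}^i_{\tilde A_i}(s)g_i(s)\,ds$ to emerge cleanly requires swapping the order of integration in $\tilde\alpha_i[F_i(u,v)]$ via Fubini, converting $\int_0^1\bigl(\int_0^1 k_i(t,s)\,d\tilde A_i(t)\bigr)g_i(s)f_i\,ds$ into the $\mathcal{K}^i_{\tilde A_i}$ notation. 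Once these reductions are in place, the final inequality $\mu\le(\text{LHS})<1$ contradicts $\mu\ge1$, so no such fixed point relation holds on $\partial K_\rho$, and property (2) of the index theorem yields $i_K(T,K_\rho)=1$.
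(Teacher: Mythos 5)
Your proposal is correct and follows essentially the same route as the paper's proof: assuming $\mu(u,v)=T(u,v)$ on $\partial K_\rho$, bounding the $H_1$ and impulse contributions together by $\tilde\alpha_1[u]$, applying $\tilde\alpha_1$ to absorb the self-referential term via $1-\tilde\alpha_1[\gamma_1]>0$ (the paper keeps $\mu-\tilde\alpha_1[\gamma_1]$ until the end and then uses $\mu\ge1$, a cosmetic difference), substituting back, and taking the supremum to contradict $\mu\ge1$. The only slip is verbal: the Hammerstein term is bounded by $\tfrac{1}{m_i}\,\rho f_i^{0,\rho}$, not by $\|u\|_\infty/m_i$ times $\rho f_i^{0,\rho}$.
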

\begin{proof}
 We show that $T(u,v) \neq \mu (u,v)$ for all $\mu \geq 1$ when $(u,v) \in \partial K_{\rho}$;
 this ensures, that the index is 1 on $K_{\rho}$. In fact, if this is not so, then there exist $(u,v)\in K$ with
$\|(u,v)\|=\rho$ and $\mu \geq 1$ such that $\mu (u,v)(t)=T(u,v)(t)$.
 Assume, without loss of
generality, that $ \| u \| _{\infty }=\rho $ and $ \| v \| _{\infty
}\leq \rho $. 
 We have for $t\in[0,1]$
\begin{align*}
 \mu u(t)=&\gamma_1(t)(H_1(\alpha_1[u])+\chi_{(\tau_1,1]}(d_{11}I_1+e_{11}N_1)(u(\tau_1)))\\
&+\delta_1(t)(L_1(\beta_1[v])+\chi_{[0,\tau_1]}(d_{12}I_1+e_{12}N_1)(u(\tau_1)))+ F_1(u,v)(t).
\end{align*}
 Since
$$
\tilde{\alpha}_1[u]\geq H_1(\alpha_1[u])+(d_{11}I_1+e_{11}N_1)(u(\tau_1)),
$$
we obtain 
\begin{equation*}
\mu u(t)\leq\gamma_1(t)\tilde{\alpha}_1[u]
+\delta_1(t)(l_{12}\beta_1[v]+(d_{12}I_1+e_{12}N_1)(u(\tau_1)))+ F_1(u,v)(t),
\end{equation*}
and moreover, since $v(t)\leq \rho$ and $u(t)\leq \rho$ for all $t\in [ 0,1]$, we obtain
\begin{align}
\mu u(t) \leq &\gamma_1(t)\tilde{\alpha}_1[u]
+\delta_1(t)(l_{12}\beta_1[\rho]+p_{122}u(\tau_1))+ F_1(u,v)(t)  \label{dis2} \\
\leq &\gamma_1(t)\tilde{\alpha}_1[u]
+\delta_1(t) \rho(l_{12}\beta_1[1]+p_{122})+ F_1(u,v)(t).  \notag
\end{align}
Applying $\tilde{\alpha}_1$ to both sides of \eqref{dis2} gives
\begin{equation*}
\mu \tilde{\alpha}_1[u] \leq 
\tilde{\alpha}_1[\gamma_1]\tilde{\alpha}_1[u]
+\tilde{\alpha}_1[\delta_1] \rho(l_{12}\beta_1[1]+p_{122})+\tilde{\alpha}_1[F_1(u,v)].  
\end{equation*}%
Thus we have
\begin{equation*}
(\mu -\tilde{\alpha}_1[\gamma_1])\tilde{\alpha}_1[u] \leq \tilde{\alpha}_1[\delta_1] \rho(l_{12}\beta_1[1]+p_{122})+\tilde{\alpha}_1[F_1(u,v)],
\end{equation*}%
that is
\begin{equation*}
\tilde{\alpha}_1[u]\leq \rho \frac{\tilde{\alpha}_1[\delta_1] (l_{12}\beta_1[1]+p_{122})}{
\mu -\tilde{\alpha}_1[\gamma_1]}+\frac{\tilde{\alpha}_1[F_1(u,v)]}{\mu -\tilde{\alpha}_1[\gamma_1]}.
\end{equation*}

Substituting into \eqref{dis2} gives
\begin{align*}
\mu u(t) \leq &\gamma _{1}(t)\Bigl(\rho \frac{\tilde{\alpha}_1[\delta_1] (l_{12}\beta_1[1]+p_{122})}{
\mu -\tilde{\alpha}_1[\gamma_1]}+\frac{\tilde{\alpha}_1[F_1(u,v)]}{\mu -\tilde{\alpha}_1[\gamma_1]}\Bigr)+\delta_1(t) \rho(l_{12}\beta_1[1]+p_{122})+ F_1(u,v)(t) \\
=&\rho \frac{\gamma _{1}(t)\tilde{\alpha}_1[\delta_1] (l_{12}\beta_1[1]+p_{122})}{
\mu -\tilde{\alpha}_1[\gamma_1]}+\rho\delta_1(t) (l_{12}\beta_1[1]+p_{122})\\
&+\frac{\gamma _{1}(t)}{\mu -\tilde{\alpha}_1[\gamma_1]}\int_{0}^{1}\mathcal{K}^1_{\tilde{A_1}}(s)g_{1}(s)f_{1}(s,u(s),v(s))\,ds +F_{1}(u,v)(t).
\end{align*}%
Since $\mu \geq 1$, we have $\dfrac{1}{\mu -\tilde{\alpha}_1[\gamma_1]}\leq
\dfrac{1}{1-\tilde{\alpha}_1[\gamma_1]}$ and therefore
\begin{align*}
\mu u(t) \leq &\rho \frac{\gamma _{1}(t)\tilde{\alpha}_1[\delta_1] (l_{12}\beta_1[1]+p_{122})}{
1 -\tilde{\alpha}_1[\gamma_1]}+\rho\delta_1(t) (l_{12}\beta_1[1]+p_{122})\\
&+\frac{\gamma _{1}(t)}{1 -\tilde{\alpha}_1[\gamma_1]}\int_{0}^{1}\mathcal{K}^1_{\tilde{A_1}}(s)g_{1}(s)f_{1}(s,u(s),v(s))\,ds +F_{1}(u,v)(t).
\end{align*}%
Taking the supremum of $t$ on $[0,1]$ gives
\begin{eqnarray*}
\mu \rho &\leq &\rho \frac{\| \gamma _{1} \| _{\infty }\tilde{\alpha}_1[\delta_1] (l_{12}\beta_1[1]+p_{122})}{
1 -\tilde{\alpha}_1[\gamma_1]}+\rho\| \delta _{1} \| _{\infty } (l_{12}\beta_1[1]+p_{122})\\
&&+\rho\frac{\| \gamma _{1} \| _{\infty }}{1 -\tilde{\alpha}_1[\gamma_1]}f_{i}^{0,{\rho }}\int_{0}^{1}\mathcal{K}^1_{\tilde{A_1}}(s)g_{1}(s)\,ds +\rho f_{i}^{0,{\rho }}\frac{1}{m_1}.
\end{eqnarray*}%

Using the hypothesis \eqref{eqmestt} we obtain $\mu \rho <\rho .$ This
contradicts the fact that $\mu \geq 1$ and proves the result.
\end{proof}

\subsection{Index on the set $V_{\rho}$}
We give two Lemma about  the index  on a set
$V_{\rho}$.  In the Lemma~\ref{idx0b1} we assume that the nonlinearities $f_1,f_2$ have the same growth.
The idea in the Lemma~\ref{idx0b3} is similar to the one in Lemma 4 of \cite{gipp-nonlin}:  we control the growth of  one nonlinearity $f_i$, at the cost of having 
to deal with a larger domain. For other results on the existence of
solutions with different growth on the nonlinearities see~\cite{precup1, ya1}. 

\begin{lem}
\label{idx0b1} Assume that

\begin{enumerate}
\item[$(\mathrm{I}_{\protect\rho }^{0})$] there exist $\rho >0$ such that
for every $i=1,2$
\begin{equation}
f_{i,(\rho ,{\rho /c})}\Bigl(\frac{c_{\gamma_i}\| \gamma _{i}\|
_{\infty }}{1-\bar{\alpha}_i[\gamma _{i}]} \int_{a_{i}}^{b_{i}}%
\mathcal{K}^i_{\bar{A_i}}(s)g_{i}(s)\,ds+\frac{1}{M_{i}}\Bigr)>1,  \label{eqMest}
\end{equation}%
{} where
\begin{eqnarray*}
f_{1,(\rho ,{\rho /c})} &=&\inf \Bigl\{\frac{f_{1}(t,u,v)}{\rho }%
:\;(t,u,v)\in [ a_{1},b_{1}]\times [ \rho ,\rho /c]\times
[ 0,\rho /c]\Bigr\}, \\
f_{2,(\rho ,{\rho /c})} &=&\inf \Bigl\{\frac{f_{2}(t,u,v)}{\rho }%
:\;(t,u,v)\in [ a_{2},b_{2}]\times [ 0,\rho /c]\times [
\rho ,\rho /c]\Bigr\} \\
\text{and}\ \frac{1}{M_{i}} &=&\inf_{t\in [
a_{i},b_{i}]}\int_{a_{i}}^{b_{i}}k_{i}(t,s)g_{i}(s)\,ds.
\end{eqnarray*}
\end{enumerate}

Then $i_{K}(T,V_{\rho})=0$.
\end{lem}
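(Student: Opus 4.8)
The plan is to apply property (1) of the fixed point index with $e=(1,1)$, which lies in $K\setminus\{0\}$ since the constant functions satisfy $\min_{[a_i,b_i]}1=1\ge c_i$, so that it suffices to show $(u,v)\neq T(u,v)+\mu(1,1)$ for every $(u,v)\in\partial V_\rho$ and every $\mu\ge 0$. Suppose, for contradiction, that equality holds for some such $(u,v)$ and $\mu$. By the boundary description in the preceding Lemma there is an index $i$, say $i=1$ (the case $i=2$ being symmetric), with $\rho\le u(t)\le\rho/c$ and $\min_{t\in[a_1,b_1]}u(t)=\rho$ on $[a_1,b_1]$, while $0\le v(t)\le\rho/c$ on $[a_2,b_2]$ and $\|u\|_\infty,\|v\|_\infty\le\rho/c$. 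From $u=T_1(u,v)+\mu$, the nonnegativity of $\delta_1 L_1(\beta_1[v])$ and of $\mu$, and the fact that on $[a_1,b_1]\subseteq(\tau_1,1]$ the impulse acts through $\gamma_1$, I would first record the pointwise bound
\[
u(t)\ge\gamma_1(t)\bigl(h_{11}\alpha_1[u]+p_{111}u(\tau_1)\bigr)+F_1(u,v)(t)=\gamma_1(t)\bar{\alpha}_1[u]+F_1(u,v)(t),\qquad t\in[a_1,b_1],
\]
obtained by combining $H_1(w)\ge h_{11}w$ with $(d_{11}I_1+e_{11}N_1)(w)\ge p_{111}w$, both carrying the factor $\gamma_1(t)$.

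Next I would produce the two quantitative lower bounds that match the hypothesis $(\mathrm{I}_\rho^0)$. For the Hammerstein term, since $u(s)\in[\rho,\rho/c]$ and $v(s)\in[0,\rho/c]$ for $s\in[a_1,b_1]$, the definition of $f_{1,(\rho,\rho/c)}$ gives $f_1(s,u(s),v(s))\ge\rho f_{1,(\rho,\rho/c)}$ there, whence $F_1(u,v)(t)\ge\rho f_{1,(\rho,\rho/c)}/M_1$ for $t\in[a_1,b_1]$ and, after integrating $k_1$ against $d\bar{A}_1$ and applying Fubini, $\bar{\alpha}_1[F_1(u,v)]\ge\rho f_{1,(\rho,\rho/c)}\int_{a_1}^{b_1}\mathcal{K}^1_{\bar{A_1}}(s)g_1(s)\,ds$. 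For the nonlocal contribution I would apply the positive functional $\bar{\alpha}_1$ to the integral equation and solve the resulting self-referential inequality to obtain $\bar{\alpha}_1[u]\ge\bar{\alpha}_1[F_1(u,v)]/(1-\bar{\alpha}_1[\gamma_1])$, the denominator being positive because $\bar{\alpha}_1[\gamma_1]\le\tilde{\alpha}_1[\gamma_1]<1$.

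Finally, inserting these two estimates into the pointwise bound and using $\gamma_1(t)\ge c_{\gamma_1}\|\gamma_1\|_\infty$ on $[a_1,b_1]$ yields
\[
\rho=\min_{t\in[a_1,b_1]}u(t)\ge\rho f_{1,(\rho,\rho/c)}\Bigl(\frac{c_{\gamma_1}\|\gamma_1\|_\infty}{1-\bar{\alpha}_1[\gamma_1]}\int_{a_1}^{b_1}\mathcal{K}^1_{\bar{A_1}}(s)g_1(s)\,ds+\frac{1}{M_1}\Bigr)>\rho,
\]
the strict inequality being exactly $(\mathrm{I}_\rho^0)$; this contradiction proves $i_K(T,V_\rho)=0$. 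I expect the main obstacle to be the self-referential estimate for $\bar{\alpha}_1[u]$, that is, recovering precisely the factor $1/(1-\bar{\alpha}_1[\gamma_1])$. The difficulty is that the impulsive term $G_1$ enters $\bar{\alpha}_1[\cdot]$ only through $\gamma_1\chi_{(\tau_1,1]}$ rather than through $\gamma_1$ itself, so to reproduce the coefficient $\bar{\alpha}_1[\gamma_1]$ one must apply $\bar{\alpha}_1$ with care, separating its action at the point $\tau_1$ and on $[0,\tau_1]$ versus $(\tau_1,1]$, and exploiting the very definition $\bar{\alpha}_1[w]=h_{11}\alpha_1[w]+p_{111}w(\tau_1)$, which is engineered to absorb both the boundary datum $H_1$ and the impulsive effect at $\tau_1$. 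Everything else reduces to routine positivity and monotonicity computations.
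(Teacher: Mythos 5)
Your proposal is correct and follows essentially the same route as the paper: the same test element $(e,e)=(1,1)$ with property (1) of the index, the same pointwise inequality $u(t)\ge\gamma_1(t)\bar{\alpha}_1[u]+F_1(u,v)(t)+\mu$ obtained from $\bar{\alpha}_1[u]\le H_1(\alpha_1[u])+(d_{11}I_1+e_{11}N_1)(u(\tau_1))$, the same self-referential solve using $\bar{\alpha}_1[\gamma_1]<1$, and the same final minimum estimate over $[a_1,b_1]$ (the paper merely keeps the $+\mu$ to the end, reaching $\rho>\rho+\mu$ instead of $\rho>\rho$). The point you flag as the main obstacle is handled in the paper exactly as you anticipate, by absorbing both $H_1(w)\ge h_{11}w$ and $(d_{11}I_1+e_{11}N_1)(w)\ge p_{111}w$ into the single functional $\bar{\alpha}_1$.
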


\begin{proof}
Let $e(t)\equiv 1$ for $t\in [ 0,1]$. Then $(e,e)\in K$. We prove that
\begin{equation*}
(u,v)\neq T(u,v)+\mu (e,e)\quad \text{for }(u,v)\in \partial V_{\rho }\quad
\text{and }\mu \geq 0.
\end{equation*}%
In fact, if this does not happen, there exist $(u,v)\in \partial V_{\rho }$
and $\mu \geq 0$ such that $(u,v)=T(u,v)+\mu (e,e)$. Without loss of
generality, we can assume that for all $t\in [ a_{1},b_{1}]$ we have
\begin{equation*}
\rho \leq u(t)\leq {\rho /c},\newline
\ \min u(t)=\rho \newline
\ \text{and }\newline
\ 0\leq v(t)\leq {\rho /c}.
\end{equation*}%
For $t\in [ a_{1},b_{1}]$, we have
\begin{align*}
 u(t)&=\gamma_1(t)(H_1(\alpha_1[u])+(d_{11}I_1+e_{11}N_1)(u(\tau_1)))+\delta_1(t)L_1(\beta_1[v])+ F_1(u,v)(t)+\mu e(t)\\
&\geq \gamma_1(t)(H_1(\alpha_1[u])+(d_{11}I_1+e_{11}N_1)(u(\tau_1)))+ F_1(u,v)(t)+\mu e(t).
\end{align*}
 Since
$$
\bar{\alpha}_1[u]\leq H_1(\alpha_1[u])+(d_{11}I_1+e_{11}N_1)(u(\tau_1)),
$$
we have
\begin{equation}
u(t)\geq \gamma _{1}(t)\bar{\alpha}_1[u]+F_{1}(u,v)(t)+\mu e(t).  \label{dis3}
\end{equation}%
Applying $\bar{\alpha}_1$ to both sides of \eqref{dis3} gives
\begin{align*}
\bar{\alpha}_1[u]& \geq \bar{\alpha}_1[\gamma _{1}]\bar{\alpha}_1[u]+\bar{\alpha}_1[F_{1}(u,v)]+\mu \bar{\alpha}_1[e].
\end{align*}%
This can be written in the form
\begin{equation*}
(1-\bar{\alpha}_1[\gamma _{1}])\bar{\alpha}_1[u]\geq \bar{\alpha}_1[F_{1}(u,v)]+\mu
\bar{\alpha}_1[e],
\end{equation*}%
that is
\begin{equation*}
\bar{\alpha}_1[u]\geq \frac{\bar{\alpha}_1[F_{1}(u,v)]}{1-\bar{\alpha}_1[\gamma
_{1}]}+\frac{\mu \bar{\alpha}_1[e]}{1-\bar{\alpha}_1[\gamma _{1}]}.
\end{equation*}%
Thus, \eqref{dis3} becomes
\begin{align*}
u(t)\geq & \frac{\gamma _{1}(t)\bar{\alpha}_1[F_{1}(u,v)]}{1-\bar{\alpha}_1[\gamma _{1}]}+\frac{\mu \gamma _{1}(t)\bar{\alpha}_1[e]}{1-\bar{\alpha}_1[\gamma _{1}]}+F_{1}(u,v)(t)+\mu e(t) \\
=& \frac{\gamma _{1}(t)}{1-\bar{\alpha}_1[\gamma _{1}]} \int_{0}^{1}%
\mathcal{K}^1_{\bar{A_i}}(s)g_{1}(s)f_{1}(s,u(s),v(s))\,ds+\frac{\mu \gamma
_{1}(t)\bar{\alpha}_1[e]}{1-\bar{\alpha}_1[\gamma _{1}]} \\
& +\int_{0}^{1}k_{1}(t,s)g_{1}(s)f_{1}(s,u(s),v(s))\,ds+\mu .
\end{align*}%
Then we have, for $t\in [ a_{1},b_{1}]$,
\begin{align*}
u(t)\geq & \frac{c_{\gamma_1}\| \gamma _{1}\| _{\infty }}{1-\bar{\alpha}_1[\gamma _{1}]} \int_{a_{1}}^{b_{1}}\mathcal{K}^1
_{\bar{A_1}}(s)g_{1}(s)f_{1}(s,u(s),v(s))\,ds+\frac{\mu c_{\gamma_i}\| \gamma_{1}\| _{\infty }\bar{\alpha}_1[e]}{1-\bar{\alpha}_1[\gamma _{1}]} \\
& +\int_{a_{1}}^{b_{1}}k_{1}(t,s)g_{1}(s)f_{1}(s,u(s),v(s))\,ds+\mu  \\
\geq & \frac{c_{\gamma_1}\| \gamma _{1}\| _{\infty }}{1-\bar{\alpha}_1[\gamma_{1}]} \int_{a_{1}}^{b_{1}}\mathcal{K}^1
_{\bar{A_1}}(s)g_{1}(s)f_{1}(s,u(s),v(s))\,ds \\
& +\int_{a_{1}}^{b_{1}}k_{1}(t,s)g_{1}(s)f_{1}(s,u(s),v(s))\,ds+\mu .
\end{align*}%
Taking the minimum over $[a_{1},b_{1}]$ gives
\begin{eqnarray*}
\rho=\min_{t\in [ a_{1},b_{1}]}u(t) &\geq &{\rho }f_{1,(\rho ,{\rho /c})}
\frac{c_{\gamma_1}\|\gamma _{1}\| _{\infty }}{1-\bar{\alpha}_1[\gamma _{1}]} \int_{a_{1}}^{b_{1}}\mathcal{K}^1_{\bar{A_1}}(s)g_{1}(s)\,ds+{\rho }
f_{1,(\rho ,{\rho /c})}\frac{1}{M_{1}}+\mu  \\
&=&{\rho }f_{1,(\rho ,{\rho /c})}\Bigl(\frac{c_{\gamma_1}\| \gamma _{1}\|
_{\infty }}{1-\bar{\alpha}_1[\gamma _{1}]} \int_{a_{1}}^{b_{1}}%
\mathcal{K}^1_{\bar{A_1}}(s)g_{1}(s)\,ds+\frac{1}{M_{1}}\Bigr)+\mu .
\end{eqnarray*}%
Using the hypothesis \eqref{eqMest} we obtain $\rho>\rho +\mu $, a contradiction.
\end{proof}

\begin{lem}\label{idx0b3}
 Assume that
\begin{enumerate}
\item[$(\mathrm{I}_{\protect\rho }^{0})^{\star }$] there exist $\rho >0$
such that for some $i=1,2$
\begin{equation}
f_{i,(0,{\rho /c})}^{\ast }\Bigl(\frac{c_{\gamma_i}\| \gamma _{i}\|
_{\infty }}{1-\bar{\alpha}_i[\gamma _{i}]} \int_{a_{i}}^{b_{i}}%
\mathcal{K}^i_{\bar{A_i}}(s)g_{i}(s)\,ds+\frac{1}{M_{i}}\Bigr)>1,  \label{eqMest1}
\end{equation}%
{}
\end{enumerate}

where
\begin{equation*}
f^*_{i,(0,{\rho / c})}=\inf \Bigl\{ \frac{f_i(t,u,v)}{ \rho}:\; (t,u,v)\in
[a_i,b_i]\times[0,\rho/c]\times[0, \rho/c]\Bigr\}.
\end{equation*}
Then $i_{K}(T,V_{\rho})=0$.
\end{lem}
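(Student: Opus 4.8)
The plan is to mirror the proof of Lemma~\ref{idx0b1}, again invoking property~(1) of the fixed point index with the constant function $e(t)\equiv 1$ (so that $(e,e)\in K$), but to exploit the enlarged domain appearing in $(\mathrm{I}_{\rho}^{0})^{\star}$ so that no hypothesis on \emph{which} component attains the minimum on $\partial V_{\rho}$ is needed. I would suppose, for contradiction, that $(u,v)=T(u,v)+\mu(e,e)$ for some $(u,v)\in\partial V_{\rho}$ and some $\mu\ge 0$, and assume without loss of generality that the hypothesis holds for $i=1$. From the properties of $V_{\rho}$ recorded above, on $\partial V_{\rho}$ one has $0\le u(t)\le\rho/c$ for $t\in[a_1,b_1]$, $0\le v(t)\le\|v\|_{\infty}\le\rho/c$ for all $t\in[0,1]$, and $\min_{t\in[a_1,b_1]}u(t)\le\rho$; crucially, I do \emph{not} assume that this minimum equals $\rho$.

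I would then run the identical chain of estimates as in Lemma~\ref{idx0b1}: discard the nonnegative term $\delta_1(t)L_1(\beta_1[v])$, use the inequality $\bar{\alpha}_1[u]\le H_1(\alpha_1[u])+(d_{11}I_1+e_{11}N_1)(u(\tau_1))$ to obtain $u(t)\ge\gamma_1(t)\bar{\alpha}_1[u]+F_1(u,v)(t)+\mu e(t)$, apply the functional $\bar{\alpha}_1$ to both sides, solve for $\bar{\alpha}_1[u]$, and substitute back. Restricting to $t\in[a_1,b_1]$ and using the lower bounds on $\gamma_1$ and on $k_1$ there, this arrives at
$$u(t)\ge \frac{c_{\gamma_1}\|\gamma_1\|_{\infty}}{1-\bar{\alpha}_1[\gamma_1]}\int_{a_1}^{b_1}\mathcal{K}^1_{\bar{A_1}}(s)g_1(s)f_1(s,u(s),v(s))\,ds+\int_{a_1}^{b_1}k_1(t,s)g_1(s)f_1(s,u(s),v(s))\,ds+\mu,$$
exactly as before.

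The step where this lemma diverges from the previous one is the pointwise lower bound on $f_1$. Since $(u(s),v(s))\in[0,\rho/c]\times[0,\rho/c]$ for \emph{every} $s\in[a_1,b_1]$, the enlarged infimum gives $f_1(s,u(s),v(s))\ge\rho\,f^{\ast}_{1,(0,\rho/c)}$ there, \emph{without} requiring $u(s)\ge\rho$. Taking the minimum over $[a_1,b_1]$ and invoking $(\mathrm{I}_{\rho}^{0})^{\star}$ then yields $\min_{t\in[a_1,b_1]}u(t)>\rho+\mu\ge\rho$, which contradicts $\min_{t\in[a_1,b_1]}u(t)\le\rho$. The main point to watch is precisely this decoupling: in Lemma~\ref{idx0b1} the estimate on $f_i$ relied on $u\ge\rho$ on $[a_i,b_i]$, which forced the component attaining the minimum to be the one with controlled growth and hence required the condition for both indices; here the larger domain $[0,\rho/c]^2$ removes that constraint, which is why it suffices to impose the growth condition for a single $i$. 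I expect no genuine obstacle beyond correctly citing the $\partial V_{\rho}$ bounds to justify that the enlarged-domain infimum applies at all points of $[a_1,b_1]$.
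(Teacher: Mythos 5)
Your proposal is correct and follows essentially the same route as the paper: the constant perturbation $e\equiv 1$, the chain of estimates inherited from Lemma~\ref{idx0b1}, and the observation that on $\partial V_{\rho}$ both components lie in $[0,\rho/c]$ on $[a_1,b_1]$, so the enlarged infimum $f^{\ast}_{1,(0,\rho/c)}$ applies without knowing which component attains the minimum $\rho$. The contradiction $\min_{t\in[a_1,b_1]}u(t)>\rho+\mu\geq\rho$ against $\min_{t\in[a_1,b_1]}u(t)\leq\rho$ is exactly the one used in the paper.
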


\begin{proof}
Suppose that the condition \eqref{eqMest1} holds for $i=1$. Let $e(t)\equiv
1 $ for $t\in [ 0,1]$. Then $(e,e)\in K$. We prove that
\begin{equation*}
(u,v)\neq T(u,v)+\mu (e,e)\quad \text{for }(u,v)\in \partial V_{\rho }\quad
\text{and }\mu \geq 0.
\end{equation*}%
In fact, if this does not happen, there exist $(u,v)\in \partial V_{\rho }$
and $\mu \geq 0$ such that $(u,v)=T(u,v)+\mu (e,e)$. So, for all $t\in
[ a_{1},b_{1}]$, $\min u(t)\leq \rho $ and for $t\in [
a_{2},b_{2}]$, $\min v(t)\leq \rho $. 
We obtain, for $t\in [ a_1,b_1]$, with  the same proof of Lemma \ref{idx0b1},
\begin{align*}
u(t)\geq & \frac{\gamma _{1}(t)}{1-\bar{\alpha}_{1}[\gamma _{1}]}
\int_{a_1}^{b_1}\mathcal{K}^1_{\bar{A_1}}(s)g_{1}(s)f_{1}(s,u(s),v(s))\,ds\\
& +\int_{a_1}^{b_1}k_{1}(t,s)g_{1}(s)f_{1}(s,u(s),v(s))\,ds+\mu .
\end{align*}%
Then we have
\begin{eqnarray*}
\min_{t\in [ a_{1},b_{1}]}u(t) &\geq &{\rho }f_{1,(0,{\rho /c})}^{\ast
}\frac{c_{\gamma_1}\| \gamma _{1}\| _{\infty }}{1-\bar{\alpha}_{1}[\gamma_{1}]} \int_{a_{1}}^{b_{1}}\mathcal{K}^1_{\bar{A_1}}(s)g_{1}(s)\,ds+{\rho }f_{1,(0,{\rho /c})}^{\ast }\frac{1}{M_{1}}+\mu. \\
 \end{eqnarray*}%
Using the hypothesis \eqref{eqMest1} we obtain $\min_{t\in [
a_{1},b_{1}]}u(t)>\rho +\mu \geq \rho $, a contradiction.
\end{proof}
\section{Existence and multiplicity of the solutions}
By combining the above results on the index of the sets $V_\rho$ and $K_\rho$  we obtain the following Theorem, in which we
deal with the existence of at least one, two or three solutions. It is
possible to state results for four or more positive solutions by expanding the lists in conditions $(S_{5}),(S_{6})$, see for
example the paper~\cite{kljdeds} for this type of results. \\
 We omit the proof of the Theorem \ref{thmmsol1} which follows from the properties of fixed point index.

\begin{thm}
\label{thmmsol1} The system \eqref{syst} has at least one positive solution
in $K$ if either of the following conditions hold.

\begin{enumerate}

\item[$(S_{1})$] There exist $\rho _{1},\rho _{2}\in (0,\infty )$ with $\rho
_{1}/c<\rho _{2}$ such that $(\mathrm{I}_{\rho _{1}}^{0})\;\;[\text{or}\;(%
\mathrm{I}_{\rho _{1}}^{0})^{\star }],\;\;(\mathrm{I}_{\rho _{2}}^{1})$ hold.

\item[$(S_{2})$] There exist $\rho _{1},\rho _{2}\in (0,\infty )$ with $\rho
_{1}<\rho _{2}$ such that $(\mathrm{I}_{\rho _{1}}^{1}),\;\;(\mathrm{I}%
_{\rho _{2}}^{0})$ hold.
\end{enumerate}

The system \eqref{syst} has at least two positive solutions in $K$ if one of
the following conditions hold.

\begin{enumerate}

\item[$(S_{3})$] There exist $\rho _{1},\rho _{2},\rho _{3}\in (0,\infty )$
with $\rho _{1}/c<\rho _{2}<\rho _{3}$ such that $(\mathrm{I}_{\rho
_{1}}^{0})\;\;[\text{or}\;(\mathrm{I}_{\rho _{1}}^{0})^{\star }],\;\;(%
\mathrm{I}_{\rho _{2}}^{1})$ $\text{and}\;\;(\mathrm{I}_{\rho _{3}}^{0})$
hold.

\item[$(S_{4})$] There exist $\rho _{1},\rho _{2},\rho _{3}\in (0,\infty )$
with $\rho _{1}<\rho _{2}$ and $\rho _{2}/c<\rho _{3}$ such that $(\mathrm{I}%
_{\rho _{1}}^{1}),\;\;(\mathrm{I}_{\rho _{2}}^{0})$ $\text{and}\;\;(\mathrm{I%
}_{\rho _{3}}^{1})$ hold.
\end{enumerate}

The system \eqref{syst} has at least three positive solutions in $K$ if one
of the following conditions hold.

\begin{enumerate}
\item[$(S_{5})$] There exist $\rho _{1},\rho _{2},\rho _{3},\rho _{4}\in
(0,\infty )$ with $\rho _{1}/c<\rho _{2}<\rho _{3}$ and $\rho _{3}/c<\rho
_{4}$ such that $(\mathrm{I}_{\rho _{1}}^{0})\;\;[\text{or}\;(\mathrm{I}%
_{\rho _{1}}^{0})^{\star }],$ $(\mathrm{I}_{\rho _{2}}^{1}),\;\;(\mathrm{I}%
_{\rho _{3}}^{0})\;\;\text{and}\;\;(\mathrm{I}_{\rho _{4}}^{1})$ hold.

\item[$(S_{6})$] There exist $\rho _{1},\rho _{2},\rho _{3},\rho _{4}\in
(0,\infty )$ with $\rho _{1}<\rho _{2}$ and $\rho _{2}/c<\rho _{3}<\rho _{4}$
such that $(\mathrm{I}_{\rho _{1}}^{1}),\;\;(\mathrm{I}_{\rho
_{2}}^{0}),\;\;(\mathrm{I}_{\rho _{3}}^{1})$ $\text{and}\;\;(\mathrm{I}%
_{\rho _{4}}^{0})$ hold.
\end{enumerate}
\end{thm}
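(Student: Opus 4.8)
\section*{Proof proposal}

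The plan is to obtain Theorem~\ref{thmmsol1} as pure bookkeeping on top of the three index computations already in hand: the index is $1$ on $K_\rho$ under $(\mathrm{I}_\rho^1)$ (the Lemma on the set $K_\rho$), and the index is $0$ on $V_\rho$ under either $(\mathrm{I}_\rho^0)$ or $(\mathrm{I}_\rho^0)^\star$ (Lemma~\ref{idx0b1} and Lemma~\ref{idx0b3}). The engine is property $(3)$ of the fixed point index theorem stated above: whenever the inner set has its closure inside the outer set and the two carry opposite index values ($0$ and $1$), there is a fixed point in the annular region between them. The strategy is to produce one such region for each consecutive pair in the prescribed chain of radii, and then to note that successive regions are separated by a level set and are therefore mutually disjoint, so the fixed points they contain are genuinely distinct positive solutions in $K$.

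First I would dispatch the single-solution cases, which are the two building blocks reused throughout. For $(S_1)$, the hypothesis $(\mathrm{I}_{\rho_1}^0)$ (or $(\mathrm{I}_{\rho_1}^0)^\star$) gives $i_K(T,V_{\rho_1})=0$ and $(\mathrm{I}_{\rho_2}^1)$ gives $i_K(T,K_{\rho_2})=1$; to apply property $(3)$ with inner set $V_{\rho_1}$ and outer set $K_{\rho_2}$ I must verify $\overline{V_{\rho_1}}\subset K_{\rho_2}$, and this is exactly where $\rho_1/c<\rho_2$ is used, since the nesting Lemma gives $V_{\rho_1}\subset K_{\rho_1/c}$ and hence $\overline{V_{\rho_1}}\subset\{\,\|(u,v)\|\le\rho_1/c\,\}\subset K_{\rho_2}$. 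Property $(3)$ then yields a fixed point in $K_{\rho_2}\setminus\overline{V_{\rho_1}}$. For $(S_2)$ the roles are reversed: $(\mathrm{I}_{\rho_1}^1)$ gives index $1$ on $K_{\rho_1}$, $(\mathrm{I}_{\rho_2}^0)$ gives index $0$ on $V_{\rho_2}$, and here only $\rho_1<\rho_2$ is needed, because $\|(u,v)\|\le\rho_1$ forces $\min_{[a_i,b_i]}w_i\le\rho_1<\rho_2$ for both components, so $\overline{K_{\rho_1}}\subset V_{\rho_2}$; property $(3)$ now gives a fixed point in $V_{\rho_2}\setminus\overline{K_{\rho_1}}$.

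For the multiplicity statements $(S_3)$--$(S_6)$ I would simply chain these two blocks along the increasing radii, reading off at each step which block applies from whether the inner condition is of index $0$ or index $1$. In $(S_3)$, the pair $(\mathrm{I}_{\rho_1}^0),(\mathrm{I}_{\rho_2}^1)$ produces a solution in $K_{\rho_2}\setminus\overline{V_{\rho_1}}$ and the pair $(\mathrm{I}_{\rho_2}^1),(\mathrm{I}_{\rho_3}^0)$ produces one in $V_{\rho_3}\setminus\overline{K_{\rho_2}}$; since the first region lies inside $K_{\rho_2}$ and the second is disjoint from $\overline{K_{\rho_2}}$, the solutions differ. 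The identical alternation handles $(S_4)$, where the two regions are separated by $\overline{V_{\rho_2}}$, and yields three disjoint regions, hence three solutions, in $(S_5)$ and $(S_6)$.

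The argument is routine, so the only step that genuinely requires care---and the one I expect to be the main obstacle---is matching each inequality among the radii to the exact inclusion of consecutive index sets demanded by property $(3)$. The asymmetry is essential: passing from a $V$-set of index $0$ (inner) to a $K$-set of index $1$ (outer) costs the stronger gap $\rho_{\mathrm{in}}/c<\rho_{\mathrm{out}}$ through $V_{\rho_{\mathrm{in}}}\subset K_{\rho_{\mathrm{in}}/c}$, whereas passing from a $K$-set of index $1$ (inner) to a $V$-set of index $0$ (outer) costs only $\rho_{\mathrm{in}}<\rho_{\mathrm{out}}$; this is precisely what dictates the differing constraints on consecutive radii between the conditions that begin with $(\mathrm{I}^0)$ and those that begin with $(\mathrm{I}^1)$.
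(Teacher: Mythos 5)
Your proposal is correct and is exactly the argument the authors have in mind: the paper omits the proof, stating only that it ``follows from the properties of fixed point index,'' and your chaining of the index computations on $K_\rho$ and $V_\rho$ via property $(3)$, with the inclusions $\overline{V_{\rho_1}}\subset K_{\rho_1/c}\subset K_{\rho_2}$ and $\overline{K_{\rho_1}}\subset V_{\rho_2}$ justified by the nesting lemma, is the standard and intended route. You correctly identify why the gap $\rho_{\mathrm{in}}/c<\rho_{\mathrm{out}}$ is needed only when the inner set is a $V$-set; the only cosmetic omission is the one-line remark that each solution region excludes a ball around the origin, so the fixed points obtained are positive.
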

We illustrate the conditions that occur in the above Theorem in the following example, where multi-point type BCs are considered.

\begin{ex}
Consider the system
\begin{gather}
\begin{aligned}  \label{eq3.1}
u''+\frac{1}{8}(u^3+t^3v^3)+ 2=0,\,\,\,\,v''=\frac{1}{8}(\sqrt{tu}+13v^2)&,\ t \in (0,1), \\
\Delta u|_{t=1/5}=I_1(u(1/5)), \;\,\Delta u'|_{t=1/5}=N_1(u(1/5))&,\\
\Delta v|_{t=2/5}=I_2(v(2/5)), \;\,\Delta v'|_{t=2/5}=N_2(v(2/5))&,\\
u(0)=H_{1}(u(1/4)),\ u(1)=L_{1}(v(3/4)),  v(0)=H_{2}(v(1/3)),\ v^{\prime}(1)&=L_{2}(u(2/3)).\\
\end{aligned}
\end{gather}

This differential system can be rewritten in the integral form
\begin{align*}
u(t)=&(1-t)H_{1}(u(1/4))+tL_{1}(v(3/4))+G_1(u)(t)+ \int_{0}^{1}k_1(t,s)g_1(s)f_1(s,u(s),v(s))\,ds, \\
v(t)=&H_{2}(v(1/3))+tL_{2}(u(2/3))+G_2(v)(t)+\int_{0}^{1}k_2(t,s)g_2(s)f_2(s,u(s),v(s))\,ds,%
\end{align*}
where the Green's functions
\begin{equation*}
k_1(t,s)=%
\begin{cases}
s(1-t),\, & s \leq t, \\
t(1-s),\, & s>t,%
\end{cases}
\quad \text{and}\quad k_2(t,s)=
\begin{cases}
s,\,  & s\leq t, \\
t,\,  & s>t,%
\end{cases}%
\end{equation*}
are non-negative continuous functions on $[0,1]\times[0,1]$. Here $\gamma_1(t)=1-t$,  $\gamma_2(t)=1$,  $\delta_1(t)=t$,   $\delta_2(t)=t$, 
$c_{\gamma_1}=1-b_1$, $c_{\gamma_2}=1$, $c_{\delta_1}=a_1$ and $c_{\delta_2}=a_2$.
The intervals $[a_1,b_1]$  may be chosen arbitrarily in $%
(1/5,1) $ and $[a_2,b_2]$ can be chosen arbitrarily in $%
(2/5,1]$. It is easy to check that
\begin{equation*}
k_1(t,s) \leq s(1-s):=\Phi_1(s), \quad \min_{t \in [a_1,b_1]}k_1(t,s) \geq
c_{\Phi_1} s(1-s),
\end{equation*}
where $c_{\Phi_1}=\min\{1-b_1,a_1\}$. Furthermore  we have that
\begin{equation*}
k_2(t,s) \leq s:=\Phi_2(s), \quad    \min_{t \in [a_2,b_2]} k_2(t,s) \geq c_{\Phi_2} \Phi_2(s),
\end{equation*}
where $c_{\Phi_2}=a_2$.
The choice $[a_1,b_1]=[1/4,3/4]$ and   $ [a_2,b_2]=[1/2,1]$ gives
\begin{equation*}
c= \frac{1}{4},\,m_1=8,\, M_1=16,\,m_2=2,\, M_2=4.
\end{equation*}
In our example, the nonlinearities  used to illustrate the constants that occur in our theory are taken in a similar way as in ~\cite{gipp-nonlin,  gifmpp-cnsns, gipp-mmas, gippmz}. We consider
$$
H_{1}(w)=\left\{
\begin{array}{l}
\frac{5}{6}w,\;\;0 \leq w \leq 1,\\
\frac{1}{3}w+\frac{1}{2},\;\; w \geq 1,
\end{array}
\right.
\quad
L_{1}(w)=\frac{1}{30}(1+\sin (w ) ),
$$

$$
H_{2}(w)=\left\{
\begin{array}{l}
\frac{1}{19}w,\;\;0 \leq w \leq 2,\\
\frac{1}{25}w+\frac{12}{475},\;\; w \geq 2,
\end{array}
\right.
\quad
L_{2}(w)=\frac{1}{38}(1+\cos (w ) ).
$$

The functions $H_{i}$ and $L_{i}$ 
satisfy the conditions
\begin{equation*}
 h_{i1}w \leq H_{i}(w)\leq  h_{i2}w,\,\,\,\,  L_{i}(w)\leq  l_{i2}w,
\end{equation*}
with
$$
h_{11}=\frac{1}{3}, h_{12}=\frac{5}{6}, h_{21}=\frac{1}{25}, h_{22}=\frac{1}{19}, 
l_{12}=\frac{1}{15}, l_{22}=\frac{1}{75}.
$$

The functions 
\begin{align*}
I_1(w)&=\left\{
\begin{array}{l}
\frac{1}{100} w,\;\;0 \leq w \leq 1,\\
\frac{13}{1400}w+\frac{1}{1400},\;\; w \geq 1,
\end{array}
\right.
\quad
N_1(w)=\left\{
\begin{array}{l}
-\frac{3}{100} w,\;\;0 \leq w \leq 1,\\
-\frac{39}{1400}w-\frac{3}{1400},\;\; w \geq 1,
\end{array}
\right.
\\
I_2(w)&=\left\{
\begin{array}{l}
\frac{1}{300} w,\;\;0 \leq w \leq 1,\\
\frac{1}{400}w+\frac{1}{1200},\;\; w \geq 1,
\end{array}
\right.
\quad
N_2(w)=\left\{
\begin{array}{l}
-\frac{1}{30} w,\;\;0 \leq w \leq 1,\\
-\frac{1}{40}w-\frac{1}{120},\;\; w \geq 1,
\end{array}
\right.
\end{align*}
 satisfy the conditions for $w\in [0,\infty)$
 $$
 p_{i11} w\leq (d_{i1}I_i+e_{i1}N_i)(w) \leq p_{i12} w,
\quad
 0\leq (d_{i2}I_i+e_{i2}N_i)(w) \leq p_{i22} w,
$$
with
$$
d_{11}=1, d_{21}=1, e_{11}=-\frac{1}{5}, e_{21}=-\frac{2}{5}, d_{12}=-1, d_{22}=0, e_{12}=-\frac{4}{5}, e_{22}=-1,  
$$
$$
 p_{111}=\frac{1}{70}, p_{112}=\frac{1}{50}, p_{122}=\frac{1}{40}, p_{211}=\frac{1}{80},  p_{212}=\frac{1}{60}, p_{222}=\frac{1}{30}.
$$
We have that 
$$
\tilde{\alpha}_{1}[\gamma_{1}]=\frac{641}{1000},\tilde{\alpha}_{2}[\gamma_{2}]=\frac{79}{1140}, \tilde{\alpha}_{1}[\delta_{1}]=\frac{634}{3000},\tilde{\alpha}_{2}[\delta_{2}]=\frac{23}{950},  
$$
$$
\bar{\alpha}_{1}[\gamma_{1}]=\frac{183}{700},\bar{\alpha}_{2}[\gamma_{2}]=\frac{21}{400}, \beta_{1}[1]=\beta_{2}[1]=1.
$$

\begin{equation*}
\int_{0}^{1}\mathcal{K}^1_{\tilde{A_1}}(s)\,ds=\frac{3189}{40000}, \int_{0}^{1}\mathcal{K}^2_{\tilde{A_2}}(s)\,ds= \frac{853}{42750},
 \int_{1/4}^{3/4}\mathcal{K}^1_{\bar{A_1}}(s)\,ds=\frac{181}{8400}, \int_{1/2}^{1}\mathcal{K}^2_{\bar{A_2}}(s)\,ds=\frac{11}{1200}.
\end{equation*}

The existence of multiple solutions of the system~\eqref{eq3.1} follows from Theorem~\ref{thmmsol1}.
Then, for $\rho_1=1/8$, $\rho_2=1$ and $\rho_3=11$, we have (the constants that follow have been rounded to 2 decimal places unless exact)
\begin{align*}
\inf \Bigl\{ f_1(t,u,v):\; (t,u,v)\in [1/4,3/4]\times[0,1/2]\times[0, 1/2]%
\Bigr\}&=f_1(1/4,0,0)>14.33 \rho_1, \\
\sup \Bigl\{ f_1(t,u,v):\; (t,u,v)\in[0,1]\times[0, 1]\times[0, 1]\Bigr\}%
&=f_1(1,1,1)<2.46 \rho_2, \\
\sup \Bigl\{ f_2(t,u,v):\; (t,u,v)\in[0,1]\times[0,1]\times[0,1]\Bigr\}%
&=f_2(1,1,1)<1.82 \rho_2, \\
\inf \Bigl\{ f_1(t,u,v):\; (t,u,v)\in [1/4,3/4]\times[11,44]\times[0, 44]%
\Bigr\}&=f_1(1/4,11,0)>14.33  \rho_3, \\
\inf \Bigl\{ f_2(t,u,v):\; (t,u,v)\in [1/2,1]\times[0,44]\times[11, 44]%
\Bigr\}&=f_2(1/2,0,11)>3.86 \rho_3,
\end{align*}
that is the conditions $(\mathrm{I}^{0}_{\rho_{1}})^{\star}$, $(\mathrm{I}%
^{1}_{\rho_{2}})$ and $(\mathrm{I}^{0}_{\rho_{3}})$ are satisfied; therefore
the system~\eqref{eq3.1} has at least two positive solutions in $K$.
\end{ex}

\end{document}